\newtheorem{theorem}{Theorem}[section]
\newtheorem{lemma}[theorem]{Lemma}
\newtheorem{corollary}[theorem]{Corollary}
\def\barr{\begin{array}}
\def\earr{\end{array}}
\title{On the supersolvability of a finite group by the sum of subgroup orders}
\author{Marius T\u arn\u auceanu}
\date{February 14, 2021}
\begin{document}

\maketitle

\begin{abstract}
Let $G$ be a finite group and $\sigma_1(G)=\frac{1}{|G|}\sum_{H\leq G}\,|H|$. In this paper, we prove that if $\sigma_1(G)<2+\frac{11}{|G|}$\,, then $G$ is supersolvable. In particular, some new characterizations of the well-known groups $\mathbb{Z}_2\times\mathbb{Z}_4$ and $A_4$ are obtained. We also show that $\sigma_1(G)<c$ does not imply the supersolvability of $G$ for no constant $c\in(2,\infty)$.
\end{abstract}

{\small
\noindent
{\bf MSC2000\,:} Primary 20D60; Secondary 20D10, 20F16, 20F17.

\noindent
{\bf Key words\,:} subgroup orders, supersolvable groups.}

\section{Introduction}

Given a finite group $G$, we consider the function
$$\sigma_1(G)=\frac{1}{|G|}\sum_{H\leq G}\,|H|$$studied in our previous papers \cite{7,10,11}. Recall some basic properties of $\sigma_1$:
\begin{itemize}
\item[-] if $G$ is cyclic of order $n$ and $\sigma(n)$ denotes the sum of all divisors of $n$, then $\sigma_1(G)=\frac{\sigma(n)}{n}$\,;
\item[-] $\sigma_1$ is multiplicative, i.e. if $G_i$, $i=1,2,\dots,m$, are finite groups of coprime orders, then $\sigma_1(\prod_{i=1}^m G_i)=\prod_{i=1}^m \sigma_1(G_i)$;
\item[-] $\sigma_1(G)\geq\sigma_1(G/H)+\frac{1}{(G:H)}\,(\sigma_1(H)-1)\geq\sigma_1(G/H)$, for all $H\unlhd G$.
\end{itemize}

The function $\sigma_1$ was used to provide some criteria for a group to belong to one of the well-known classes of groups. For instance, if $\sigma_1(G)\leq 2$ then $G$ is cyclic of deficient or perfect order by \cite{7}, while if $\sigma_1(G)\leq\frac{117}{20}$\, then $G$ is solvable by \cite{4,11}. Also, we note that there is no constant $c\in(2,\infty)$ such that $\sigma_1(G)<c$ implies the nilpotency of $G$, but this can be obtained from the condition $\sigma_1(G)<2+\frac{4}{|G|}$ (see \cite{10}).

In the current paper, we will first determine the structure of finite groups $G$ satisfying $\sigma_1(G)<3$. This will allow us to prove that if $\sigma_1(G)<2+\frac{11}{|G|}$\, then $G$ is supersolvable. Moreover, $A_4$ is the unique finite non-nilpotent group $G$ such that $\sigma_1(G)=2+\frac{11}{|G|}$\,. Then we will show that $\sigma_1(G)<c$ does not imply the supersolvability of $G$ for no constant $c\in(2,\infty)$.
\smallskip

For the proof of our results, we need the following two theorems (see Theorem 1 of \cite{6} and Theorem 1 of \cite{8}).

\begin{theorem}
Suppose every non-normal maximal subgroup of a finite group $G$ has the same order. Then $G$ is solvable and the nilpotent length of $G$ is at most two.
\end{theorem}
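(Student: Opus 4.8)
The plan is to induct on $|G|$, after recording that the hypothesis is inherited by every quotient. Indeed, if $N\unlhd G$ then the maximal subgroups of $G/N$ are exactly the $M/N$ with $M$ maximal and $N\leq M$, and $M/N\unlhd G/N$ precisely when $M\unlhd G$; hence the non-normal maximal subgroups of $G/N$ correspond to the non-normal maximal subgroups of $G$ containing $N$, and all of these share the common index $n:=[G:M]$ of the non-normal maximal subgroups of $G$. Thus $G/N$ again satisfies the hypothesis. I would first dispose of the degenerate case in which $G$ has \emph{no} non-normal maximal subgroup: then every maximal subgroup is normal, so $G$ is nilpotent and there is nothing to prove. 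So from now on assume non-normal maximal subgroups exist, all of index $n$.

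For solvability I would pass to a minimal counterexample $G$. Since the hypothesis descends to quotients, every proper quotient of $G$ is solvable; embedding $G$ into a product of two such quotients shows that $G$ has a \emph{unique} minimal normal subgroup $N$, and $G/N$ is solvable. As $G$ is not solvable, $N$ is non-abelian, so $N=T_1\times\cdots\times T_k$ with the $T_i$ isomorphic non-abelian simple groups permuted transitively by $G$, and uniqueness forces $C_G(N)=1$, so $N\leq G\leq\operatorname{Aut}(N)$. The goal is then to contradict the hypothesis by exhibiting two non-normal maximal subgroups of \emph{different} orders. When $k\geq 2$ these arise from the competing product-type and diagonal-type subgroups attached to the factorization of $N$, whose indices are genuinely unequal; when $k=1$ (the almost simple case) one must produce two maximal subgroups of distinct orders inside an almost simple group. \textbf{This last point is the main obstacle}: it amounts to the statement that no almost simple group has all of its non-normal maximal subgroups of a single order, which I expect to settle by appealing to the classification of finite simple groups and the known description of their maximal subgroups (there is invariably a large parabolic-type or point-stabilizer subgroup alongside a far smaller normalizer-type subgroup). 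Producing the two subgroups yields the contradiction and proves $G$ solvable.

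It remains to bound the nilpotent length of the now-solvable $G$ by two, i.e.\ to show $G/F(G)$ is nilpotent. Since $\Phi(G)$ is nilpotent and $F(G/\Phi(G))=F(G)/\Phi(G)$, the nilpotent length is unchanged on passing to $G/\Phi(G)$, so I may assume $\Phi(G)=1$; then $F:=F(G)$ is abelian and self-centralizing ($C_G(F)=F$) and is a direct product $V_1\times\cdots\times V_r$ of minimal normal subgroups. Because $G$ is solvable, the common index $n$ is a prime power $p^a$. Now for each chief factor $V_i$, since $V_i\not\leq\Phi(G)=1$ there is a maximal subgroup $M$ with $V_iM=G$ and $V_i\cap M=1$, so $[G:M]=|V_i|$, and one checks that $M$ is non-normal exactly when $V_i$ is non-central; hence every non-central $V_i$ satisfies $|V_i|=n=p^a$ and is in particular a $p$-group, while the $p'$-part of $F$ is central and splits off. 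This reduces matters to $F$ being a self-centralizing $p$-group on which $G/F$ acts faithfully. If $G/F$ were not nilpotent it would have a non-normal maximal subgroup, lifting to a non-normal maximal subgroup of $G$ containing $F$ of index $n=p^a$; I would then play this against the faithful completely reducible action of $G/F$ on the $p$-group $F$ and against the forced index of the complements above to drive $G/F$ to nilpotency, with $A_4$ serving as the extremal configuration that the bookkeeping must accommodate. I expect this final prime-by-prime comparison to be the most delicate part of the nilpotent-length argument, while the genuinely hard ingredient of the whole proof remains the simple-group input used to establish solvability.
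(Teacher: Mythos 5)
First, a point of reference: the paper does not prove this statement at all --- it is quoted verbatim as Theorem 1 of Kano's note \cite{6} and used as an imported ingredient --- so your proposal has to stand entirely on its own. It does not yet do so. The decisive step of your solvability argument is precisely the part you flag and then defer: that no almost simple group has all of its non-normal maximal subgroups of a single order (and, for $k\ge 2$, that the ``product-type'' and ``diagonal-type'' subgroups are genuinely maximal in $G$ itself and have unequal orders). ``I expect to settle this by appealing to the classification'' is a research program, not a proof: one would have to run through every family of simple groups and their maximal overgroup structure, including cases such as ${\rm PSL}(2,q)$ and the sporadic groups where the heuristic ``large parabolic versus small normalizer'' is not available in that form. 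Importing the full classification for a two-page 1979 result is also a disproportionate dependency; as written, the solvability half has a genuine hole exactly where the theorem's content lies.

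The Fitting-length half also stops short. Your reductions are sound: the hypothesis passes to quotients, one may assume $\Phi(G)=1$, $F=F(G)=C_G(F)$ is an abelian product of minimal normal subgroups, the common index $n$ is a prime power $p^a$ by solvability, and every non-central minimal normal subgroup inside $F$ is complemented by a non-normal maximal subgroup and hence has order $n=p^a$. But the conclusion ``play this against the faithful completely reducible action \dots to drive $G/F$ to nilpotency'' is not an argument. The missing step is short and arithmetic rather than representation-theoretic: since $F$ is, modulo a central $p'$-direct factor, equal to $O_p(G)$, one has $O_p(G/F)=1$; if $p$ divided $|G/F|$, then a maximal subgroup of $G/F$ containing the normalizer of a (non-normal) Sylow $p$-subgroup would be self-normalizing of index $\equiv 1 \pmod p$ by Sylow counting, contradicting that this index must equal $n=p^a$. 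Hence $p\nmid |G/F|$, and then $G/F$ can have no non-normal maximal subgroup whatsoever, because its index would be the $p$-power $n$ dividing the $p'$-number $|G/F|$; so all maximal subgroups of $G/F$ are normal and $G/F$ is nilpotent. Without supplying something of this kind, your sketch of the second half does not close either.
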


\begin{theorem}
A finite group $G$ contains a cyclic maximal subgroup if and only if it is of one of the following types:
\begin{itemize}
\item[{\rm a)}] $G=P\times K_1$, where $K_1$ is an arbitrary finite cyclic group and $P$ is a Sylow $p$-subgroup of $G$ containing a cyclic maximal subgroup.
\item[{\rm b)}] $G=(K_2\rtimes P)\times K_1$, where $K_1$ is an arbitrary finite cyclic group which is a Hall subgroup of $G$, $P$ is a Sylow $p$-subgroup of $G$ containing a cyclic maximal subgroup, $K_2\rtimes P$ is non-nilpotent, and the centralizer $C_P(K_2)$ is a maximal cyclic subgroup of $P$.
\item[{\rm c)}] $G=(P\rtimes K_2)\times K_1$, where $K_1$ is an arbitrary finite cyclic group which is a Hall subgroup of $G$ and $G_1=P\rtimes K_2$ is a non-nilpotent group satisfying the following conditions: $P$ is a SyIow $p$-subgroup of $G_1$, $C_P(K_2)\supseteq\Phi(P)$, and $C_P(K_2)$ is a cyclic invariant subgroup of $G_1$ such that $K_2C_P(K_2)/C_P(K_2)$ is a maximal subgroup of $G_1/C_P(K_2)$.
\end{itemize}
\end{theorem}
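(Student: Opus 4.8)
The statement is an equivalence, and I would prove the two implications separately, since the ``if'' direction is essentially a verification while the ``only if'' direction carries the classification. For the ``if'' direction I would, in each case, exhibit a cyclic maximal subgroup and check its index. In type a) the subgroup $P_0\times K_1$, with $P_0$ a cyclic maximal subgroup of $P$, is cyclic (because $p\nmid|K_1|$ and $K_1$ is cyclic) and has index $[P:P_0]=p$, hence is maximal. In types b) and c) the natural candidate is $C_P(K_2)\times K_2\times K_1$: here $C_P(K_2)$ centralises $K_2$ and has order coprime to it, so the product is cyclic once $C_P(K_2)$ and $K_2$ are, and its index in $G$ equals $[P:C_P(K_2)]$. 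Maximality then follows from the stated hypotheses — in b) because $C_P(K_2)$ is maximal in $P$, and in c) because $\Phi(P)\le C_P(K_2)$ together with the maximality of $K_2C_P(K_2)/C_P(K_2)$ in $G_1/C_P(K_2)$. These are routine coprimeness-and-index computations.

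For the ``only if'' direction, suppose $M$ is a cyclic, hence abelian, maximal subgroup of $G$. By the classical theorem that a finite group with an abelian maximal subgroup is solvable, $G$ is solvable, so $[G:M]$ is a prime power, say $p^a$. I would then branch on whether $M$ is normal in $G$. If $M\trianglelefteq G$, then $G/M$ is a simple solvable group, hence cyclic of prime order $p$, so $[G:M]=p$ and $G$ is metacyclic. When $G$ is moreover nilpotent, decomposing $G$ into the direct product of its Sylow subgroups shows that all but one Sylow factor already lie in $M$; the exceptional factor $P$ has $P\cap M$ as a cyclic maximal subgroup, and the remaining Sylow subgroups — cyclic by cyclicity of $M$ — assemble into a cyclic Hall direct factor $K_1$, giving type a). When $G$ is non-nilpotent I would split off the same factor $K_1$, namely the primes $q\ne p$ on which a generator of $G/M$ acts trivially, leaving $G_1=K_2\rtimes P$ with $K_2$ the $p'$-part on which the action is nontrivial and $P$ the Sylow $p$-subgroup, whose index-$p$ subgroup $C_P(K_2)$ is the cyclic maximal subgroup; this is type b).

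The remaining and most delicate case is $M\not\trianglelefteq G$. Here I would choose a minimal normal subgroup $N\not\le M$; then $G=MN$ and $p^a=[G:M]=[N:N\cap M]$, which forces $N$ to be an elementary abelian $p$-group, so that $p$ is the distinguished prime and $P:=O_p(G)$ is a normal Sylow $p$-subgroup. After again extracting the cyclic Hall direct factor $K_1$, one is left with $G_1=P\rtimes K_2$, and the substance of the argument is to convert ``$M$ cyclic and maximal'' into the defining data of type c): that $C_P(K_2)=P\cap M$ is cyclic and $G_1$-invariant, that $\Phi(P)\le C_P(K_2)$ (so that $P/C_P(K_2)$ is elementary abelian and $K_2$ acts on it with no nonzero fixed points), and that $K_2C_P(K_2)/C_P(K_2)$ is maximal in $G_1/C_P(K_2)$.

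The main obstacle I anticipate is exactly this last translation, combined with the bookkeeping of $K_1$: one must show that the primes separate cleanly into the cyclic Hall direct factor $K_1$ and a single ``active'' prime $p$, and that the three types are mutually exclusive and exhaustive. Organising everything as an induction on $|G|$, and using the coprime-factorisation of cyclic groups to peel off $K_1$, should keep this in hand; but verifying the Frattini and centraliser conditions of type c) — rather than merely producing some semidirect decomposition — is the technical heart of the proof.
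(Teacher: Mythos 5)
The first thing to note is that the paper does not prove this statement at all: it is Theorem 1.2, quoted as a known classification from Pylaev and Kuzennyi \cite{8}, and is used in the paper purely as an imported tool. There is therefore no in-paper proof to compare your sketch against; it has to be judged on its own terms.

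On its own terms, your outline is a plausible road map but contains one step that actually fails and defers the hardest work. In the non-normal case you propose to ``choose a minimal normal subgroup $N\not\le M$''; such an $N$ need not exist. Take $G=SL(2,3)$ and $M$ a cyclic maximal subgroup of order $6$: the unique minimal normal subgroup of $G$ is its centre of order $2$, and it is contained in $M$. (This $G$ is of type c), with $P=Q_8$, $K_2=\mathbb{Z}_3$ and $C_P(K_2)=Z(G)=\Phi(P)$.) The standard repair is to work in the primitive quotient $G/M_G$, where $M_G$ is the core of $M$: there the unique minimal normal subgroup is elementary abelian of order $[G:M]$ and is complemented by $M/M_G$, which acts faithfully and irreducibly on it and hence, being cyclic, is a $p'$-group; this is also what yields the normality of the Sylow $p$-subgroup of $G$, a fact you assert for $O_p(G)$ without justification. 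Beyond that, you explicitly postpone the ``technical heart'' --- establishing $\Phi(P)\le C_P(K_2)$, the $G_1$-invariance of $C_P(K_2)$, the maximality of $K_2C_P(K_2)/C_P(K_2)$, and the clean extraction of the cyclic Hall direct factor $K_1$ --- and in type b) your candidate cyclic maximal subgroup $C_P(K_2)\times K_2\times K_1$ needs $K_2$ to be cyclic, which the statement as transcribed does not record. So what you have is a credible strategy rather than a proof; for the purposes of this paper the correct move is the one the author makes, namely to cite the result.
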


We also need two auxiliary results, taken from \cite{11} and \cite{7}, respectively.

\begin{lemma}
Let $G$ be a finite group and $[M]$ be a conjugacy class of non-normal maximal subgroups of $G$. Then
$$\sum_{H\in[M]}|H|=|G|.$$
\end{lemma}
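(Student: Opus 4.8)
The plan is to exploit the interplay between maximality and conjugation, which makes the statement essentially a counting argument. First I would observe that, since $M$ is a maximal subgroup of $G$, its normalizer satisfies $M\leq N_G(M)\leq G$, and maximality forces $N_G(M)$ to equal either $M$ or $G$. Because $M$ is assumed to be non-normal, the case $N_G(M)=G$ is excluded, so $N_G(M)=M$.

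Next I would count the conjugates. By the orbit--stabilizer theorem applied to the conjugation action of $G$ on its set of subgroups, the size of the conjugacy class $[M]$ equals the index $[G:N_G(M)]=[G:M]=|G|/|M|$. Since any two conjugate subgroups have the same order, every $H\in[M]$ satisfies $|H|=|M|$.

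Finally, summing the (constant) order $|M|$ over the whole class gives
$$\sum_{H\in[M]}|H|=|[M]|\cdot|M|=\frac{|G|}{|M|}\cdot|M|=|G|,$$
which is exactly the asserted identity.

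I do not expect any serious obstacle here; the only point requiring genuine care is the deduction $N_G(M)=M$, which leans on both hypotheses at once -- maximality to collapse the normalizer to one of two extremes, and non-normality to discard the larger one. Once that step is in place, the conclusion follows purely from orbit--stabilizer counting together with the elementary fact that conjugation preserves order.
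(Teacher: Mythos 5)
Your proof is correct and is exactly the standard argument: the paper cites this lemma from an earlier reference without reproducing a proof, and the intended justification is precisely your combination of $N_G(M)=M$ (from maximality plus non-normality) with orbit--stabilizer counting. Nothing further is needed.
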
\newpage

\begin{lemma}
Let $G$ be a finite group. Then
\begin{equation}
\sum_{H\leq G,\, H={\rm cyclic}}\!\!\!|H|\geq |G|.\nonumber
\end{equation}
\end{lemma}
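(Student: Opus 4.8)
The plan is to prove this inequality by a double-counting argument on incident pairs of elements and cyclic subgroups. First I would rewrite the left-hand side as a count of such pairs: since $|H|$ equals the number of elements lying in a cyclic subgroup $H$, summing over all cyclic subgroups gives
$$\sum_{\substack{H\leq G\\ H\text{ cyclic}}}|H|=\#\{(g,H):g\in H,\ H\leq G,\ H\text{ cyclic}\}.$$
Reversing the order of counting, this equals $\sum_{g\in G}c(g)$, where $c(g)$ denotes the number of cyclic subgroups of $G$ that contain the element $g$.

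The key observation is then that every $g\in G$ lies in at least one cyclic subgroup, namely the cyclic subgroup $\langle g\rangle$ that it generates. Hence $c(g)\geq 1$ for every $g$, and summing over all elements yields
$$\sum_{\substack{H\leq G\\ H\text{ cyclic}}}|H|=\sum_{g\in G}c(g)\geq\sum_{g\in G}1=|G|,$$
which is exactly the claimed bound.

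There is really no serious obstacle here; the whole content lies in recognizing the correct quantity to count. As an alternative, I could organize the same count through the generators of each cyclic subgroup: a cyclic group of order $d$ has exactly $\varphi(d)$ generators, so partitioning $G$ according to the cyclic subgroup that each of its elements generates gives the identity $|G|=\sum_{H\text{ cyclic}}\varphi(|H|)$. Since $\varphi(d)\leq d$ for every $d\geq 1$, the inequality follows immediately, and this form also makes the slack explicit, namely $\sum_{H\text{ cyclic}}|H|-|G|=\sum_{H\text{ cyclic}}\bigl(|H|-\varphi(|H|)\bigr)$, showing that the bound is strict whenever $G$ is nontrivial.
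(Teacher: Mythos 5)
Your proof is correct. The paper states this lemma without proof (it is imported from reference [7]), and the argument you give --- partitioning $G$ by the cyclic subgroup each element generates, so that $|G|=\sum_{H\,\mathrm{cyclic}}\varphi(|H|)\leq\sum_{H\,\mathrm{cyclic}}|H|$ --- is exactly the standard one; both your double-counting phrasing and the generator-counting phrasing are valid, and your observation that the inequality is strict for nontrivial $G$ is a correct bonus.
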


Note that similar problems for some other functions related to the structure of a finite group $G$, for example for the function $\psi(G)=\sum_{x\in G}o(x)$ (where $o(x)$ denotes the order of the element $x$), have been recently investigated by many authors (see e.g. \cite{1,2,3}).
\smallskip

Most of our notation is standard and will not be repeated here. Basic definitions and results on groups can be found in \cite{5}. For subgroup lattice concepts we refer the reader to \cite{9}.

\section{Main results}

We start with the following important lemma.

\begin{lemma}
    Let $G$ be a finite group with $\sigma_1(G)<3$. Then either $G$ is nilpotent or a group of type c) in Theorem 1.2.
\end{lemma}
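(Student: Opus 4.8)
The plan is to assume $G$ is not nilpotent and to deduce that it must be of type c) in Theorem 1.2. Since $\sigma_1(G)<3<\frac{117}{20}$, the solvability criterion quoted in the introduction already gives that $G$ is solvable, and non-nilpotency guarantees that $G$ possesses at least one non-normal maximal subgroup. Writing $|G|\,\sigma_1(G)=\sum_{H\le G}|H|$, I would isolate the contributions of the trivial subgroup and of $G$ itself (namely $1+|G|$) together with, via Lemma 1.3, the contribution $|G|$ of each conjugacy class of non-normal maximal subgroups. If there were two or more such classes, these pairwise disjoint contributions would already force $|G|\,\sigma_1(G)\ge 3|G|+1$, i.e. $\sigma_1(G)\ge 3+\frac{1}{|G|}>3$, a contradiction. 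Hence there is exactly one conjugacy class $[M]$ of non-normal maximal subgroups; in particular all of them have equal order, so Theorem 1.1 applies and $G$ has nilpotent length at most two.

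Next I would show that the members of $[M]$ are cyclic. After removing the contributions of $\{1\}$, of $G$, and of $[M]$, the hypothesis $\sigma_1(G)<3$ forces the remaining proper nontrivial subgroups to satisfy $\sum_{H}|H|<|G|-1$. On the other hand, Lemma 1.4 shows that the nontrivial cyclic subgroups contribute at least $|G|-1$; since $G$ itself is not cyclic (it is non-nilpotent), each such subgroup lies either in $[M]$ or among the remaining ones, and if none of them belonged to $[M]$ they would overload the leftover sum. Therefore at least one, hence by conjugacy every, member of $[M]$ is cyclic, so $G$ has a cyclic maximal subgroup and Theorem 1.2 applies. Type a) is nilpotent and is excluded by hypothesis, leaving types b) and c).

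The main obstacle is to rule out type b), in which the Sylow $p$-subgroup $P$ is a non-normal complement of a normal Hall $p'$-subgroup $K_2$, rather than the normal factor as in type c). Here I would first absorb the cyclic Hall direct factor $K_1$, whose presence only rescales the count through the multiplicativity of $\sigma_1$, and reduce to analyzing $K_2\rtimes P$. Since $C_P(K_2)$ is a \emph{proper} maximal cyclic subgroup of $P$, the group $P$ is non-cyclic; I would exploit this to produce, inside $G$, either a non-normal maximal subgroup that is \emph{not} cyclic or a second conjugacy class of non-normal maximal subgroups, each contradicting the preceding paragraph. For instance, when $K_2$ is a minimal normal subgroup the complement $P$ itself is maximal of prime-power index, non-normal and non-cyclic, giving an immediate contradiction. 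The real work lies in the general case where $K_2$ is not minimal normal: one must check that the additional $P$-invariant subgroups of $K_2$, combined with the non-cyclic structure of $P$, always violate one of the two constraints established above, so that only the configuration of type c)—a normal Sylow $p$-subgroup with cyclic complement—survives.
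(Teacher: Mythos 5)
Your first two steps are correct and coincide with the paper's argument: Lemma 1.3 forces a unique conjugacy class $[M]$ of non-normal maximal subgroups (the paper bounds $\sigma_1(G)>3$ directly; your bookkeeping with $1+|G|+2|G|$ is the same computation), and the combination of Lemmas 1.3 and 1.4, together with the observation that a non-nilpotent $G$ is not cyclic so the cyclic subgroups are disjoint from $\{G\}$ and (if $M$ is non-cyclic) from $[M]$, forces $M$ to be cyclic. These parts are fine.

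The genuine gap is in your last step. Having placed $G$ among the types of Theorem 1.2, you must exclude type b), and you do not actually do so: you handle only the special case where $K_2$ is minimal normal and then write that ``the real work lies in the general case,'' where ``one must check'' that the remaining configurations violate your two constraints. That check is precisely the content of the lemma in this branch, and it is not obvious --- nothing you have established so far rules out, a priori, a type b) group whose single class of non-normal maximal subgroups consists of cyclic subgroups. The paper avoids this case analysis entirely. After showing $M$ is cyclic, it invokes Theorem 1.1 (every non-normal maximal subgroup has order $|M|$, so $G$ is solvable of nilpotent length at most two) and then argues directly on Sylow structure: since $G$ is solvable, $[G:M]$ is a power of a prime $p$; if a Sylow $p$-subgroup $P$ were not normal, its normalizer would lie in a non-normal maximal subgroup, necessarily conjugate to $M$, whose index would then have to be both a positive power of $p$ and prime to $p$ --- a contradiction. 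Hence $P\unlhd G$, and since $|M|=|G|/p^a$ carries the full $p'$-part of $|G|$, the cyclic group $M$ contains a (cyclic) $p$-complement. This exhibits $G=P\rtimes K$ with $K$ cyclic, i.e.\ the type c) configuration, with no need to dissect type b). To repair your proof you would either have to supply the missing analysis of type b) or replace that step by the paper's Sylow argument.
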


\begin{proof}
Assume that $G$ is not nilpotent. If $G$ has at least two conjugacy classes of non-normal maximal subgroups $[M_1]$ and $[M_2]$, then Lemma 1.3 leads to
\begin{equation}
\sigma_1(G)>\frac{1}{|G|}\left(|G|+\sum_{i=1}^2\sum_{H\in[M_i]}|H|\right)=\frac{1}{|G|}\left(|G|+2|G|\right)=3,\nonumber
\end{equation}a contradiction. Thus $G$ possesses a unique conjugacy class of non-normal maximal subgroups, say $[M]$. Moreover, $M$ is cyclic. Indeed, if this is not true, then Lemmas 1.3 and 1.4 imply
\begin{equation}
\sigma_1(G)\geq\frac{1}{|G|}\left(|G|+\!\!\sum_{H\in[M]}|H|+\!\!\!\!\!\!\!\!\sum_{H\leq G,\, H={\rm cyclic}}\!\!\!\!\!|H|\right)\geq\frac{1}{|G|}\left(|G|+|G|+|G|\right)=3,\nonumber
\end{equation}contradicting again our hypothesis.

From Theorem 1.1 it follows that $G$ is solvable of nilpotent length two. More precisely, by taking a prime $p$ dividing $[G:M]$, we infer that a Sylow $p$-subgroup $P$ of $G$ must be normal and a $p$-complement of $G$ must be cyclic. This shows that $G$ is a group of type c) in Theorem 1.2, completing the proof.\qedhere
\end{proof}

We are now able to prove our first main result.

\begin{theorem}\label{th:C1}
    Let $G$ be a finite group. If $\sigma_1(G)<2+\frac{11}{|G|}$\,, then $G$ is supersolvable.
\end{theorem}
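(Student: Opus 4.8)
The plan is to use Lemma 2.1 as the starting point: since the hypothesis $\sigma_1(G)<2+\frac{11}{|G|}$ certainly implies $\sigma_1(G)<3$ for all but finitely many small groups, I would first observe that $G$ is either nilpotent or of type c) in Theorem 1.2. Nilpotent groups are automatically supersolvable, so the entire burden of the proof falls on the type c) case. The key realization is that a type c) group $G=(P\rtimes K_2)\times K_1$ has $P$ normal and a cyclic $p$-complement, so $G$ is already "almost" supersolvable; what can obstruct supersolvability is only the internal structure of $P$ and the action of $K_2$ on it. Thus I would reduce the problem to showing that the tight bound $\sigma_1(G)<2+\frac{11}{|G|}$ forces $P$ to have a structure compatible with a chief series of prime factors.

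The core of the argument is a sharper lower bound on $\sigma_1(G)$ for type c) groups that are \emph{not} supersolvable, showing such a bound exceeds $2+\frac{11}{|G|}$. I would proceed by isolating the non-supersolvable obstruction: if $G$ is not supersolvable, then (by the theory of supersolvable groups) it must contain a chief factor of order $>p$, or equivalently the normal Sylow subgroup $P$ must fail to admit a $G$-invariant chain with cyclic factors. Using the third displayed property of $\sigma_1$ (the quotient inequality $\sigma_1(G)\geq\sigma_1(G/H)+\frac{1}{(G:H)}(\sigma_1(H)-1)$) together with multiplicativity across the direct factor $K_1$, I would push the analysis down to the non-nilpotent factor $G_1=P\rtimes K_2$ and bound its subgroup-order sum from below by counting the subgroups forced to exist. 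The smallest non-supersolvable candidate here is $A_4$, and the computation should reveal that any proper obstruction contributes enough additional subgroup mass to violate the hypothesis — with $A_4$ itself sitting exactly at the boundary value $\sigma_1(A_4)=2+\frac{11}{|A_4|}=2+\frac{11}{12}$.

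Concretely, the key steps in order would be: (1) dispose of the nilpotent case immediately; (2) invoke Lemma 2.1 to restrict to type c), reducing to $G_1=P\rtimes K_2$ via the multiplicativity over the coprime Hall factor $K_1$ and the quotient inequality; (3) analyze the action of $K_2$ on $P$, showing that non-supersolvability forces an irreducible $K_2$-module inside $P$ of dimension $\geq 2$ over $\mathbb{F}_p$, hence $|P|\geq p^2$ with no $G_1$-invariant subgroup of order $p$; (4) enumerate the resulting subgroups (the many conjugate subgroups of $P$ of intermediate order, together with complements) and sum their orders to produce a lower bound on $\sigma_1(G_1)$; and (5) verify this bound exceeds $2+\frac{11}{|G|}$ except precisely for $A_4$, which realizes equality rather than strict inequality and is therefore excluded by the strict hypothesis.

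The hard part will be step (4): producing a lower bound on $\sigma_1$ that is tight enough to exclude \emph{every} non-supersolvable type c) group while still allowing all supersolvable ones, and in particular pinning down $A_4$ as the unique extremal case. The difficulty is that type c) allows arbitrarily large cyclic Hall factors $K_1$ and arbitrarily large $K_2$, so I cannot simply bound $|G|$ from above; instead I must show that the \emph{ratio} $\sigma_1$ grows (or stays above the threshold) whenever the supersolvability obstruction is present, which requires carefully controlling how the subgroup count of the non-supersolvable core $P\rtimes K_2$ interacts with the multiplicative and quotient inequalities as the cyclic factors vary. Getting a clean, uniform estimate that separates $A_4$ from all other obstructions is where the real work lies.
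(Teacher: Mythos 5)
Your steps (1) and (2) coincide with the paper's: assume $|G|\geq 12$ (all smaller groups are supersolvable), deduce $\sigma_1(G)<3$, apply Lemma 2.1, dispose of the nilpotent case, and reduce to $G=G_1=P\rtimes K_2$ via $\sigma_1(G_1)=\sigma_1(G/K_1)\leq\sigma_1(G)<2+\frac{11}{|G|}\leq 2+\frac{11}{|G_1|}$. From that point on, however, your plan diverges into a route whose decisive step you explicitly leave unexecuted, and that step is where the proof actually lives. You propose to isolate the non-supersolvability obstruction as an irreducible $K_2$-module of dimension $\geq 2$ inside $P$ and then to count conjugates of intermediate subgroups and complements; you then concede that obtaining a uniform lower bound separating $A_4$ from all other obstructions is ``where the real work lies.'' As written, there is no estimate, so the proposal has a genuine gap at its core.

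The idea you are missing is both simpler and stronger: one does not need to detect non-supersolvability directly, only to show that the normal Sylow subgroup $P$ is \emph{cyclic}. Since the $p$-complement $K_2$ is already cyclic (it lies in the cyclic non-normal maximal subgroup $M$ produced in Lemma 2.1), once $P$ is cyclic all Sylow subgroups of $G$ are cyclic, and such a group is supersolvable --- this standard fact is the punchline, and it does not appear in your plan. The cyclicity of $P$ follows from an absolute count that resolves your worry about $K_1$ and $K_2$ being arbitrarily large: if $P$ is non-cyclic of order $p^r$ with $r\geq 2$, it has at least $p+1$ maximal subgroups, so
$$\sum_{H\leq P}|H|\;\geq\;1+p+\cdots+(p+1)p^{r-1}+p^r\;\geq\;1+p+2p^2\;\geq\;11,$$
a bound independent of $|G|$. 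Adding the contribution $|G|$ of $G$ itself and the contribution $\sum_{H\in[M]}|H|=|G|$ of the unique class of non-normal maximal subgroups (Lemma 1.3) gives $\sigma_1(G)\geq 2+\frac{11}{|G|}$, contradicting the hypothesis. No module theory, no enumeration of conjugates or complements, and no special treatment of $A_4$ is needed; $A_4$ simply realizes equality in this bound and is excluded by strictness automatically.
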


\begin{proof}
Since all groups of order $\leq 11$ are supersolvable, we may assume that $|G|\geq 12$. Then $\sigma_1(G)<3$ and therefore, by the above lemma, either $G$ is nilpotent or $G=(P\rtimes K_2)\times K_1$ with $P$, $K_1$ and $K_2$ as in Theorem 1.2, c). In the first case we are done, while in the second case we may assume that $G=G_1=P\rtimes K_2$ because
\begin{equation}
\sigma_1(G_1)=\sigma_1(G/K_1)\leq\sigma_1(G)<2+\frac{11}{|G|}\leq 2+\frac{11}{|G_1|}\,.
\end{equation}Denote by $[M]$ the unique conjugacy class of non-normal maximal subgroups of $G$. Let $|P|=p^r$ and suppose that $P$ is not cyclic. Then $r\geq 2$ and $P$ contains at least $p+1$ subgroups of order $p^{r-1}$. It follows that
\begin{equation}
\barr{lcl}
\sigma_1(G)\!\!\!\!&\geq&\!\!\!\!\displaystyle\frac{1}{|G|}\left(|G|+\!\!\displaystyle\sum_{H\in [M]}|H|+\!\displaystyle\sum_{H\leq P}|H|\right)\!\!\geq\!2+\displaystyle\frac{1+p+\cdots+(p+1)p^{r-1}\!+p^r}{|G|}\vspace{1,5mm}\\
&\geq& \!\!\!\!2+\displaystyle\frac{1+(p+1)p+p^2}{|G|}=2+\displaystyle\frac{1+p+2p^2}{|G|}\geq 2+\displaystyle\frac{11}{|G|}\,,\earr\nonumber
\end{equation}contradicting (1). Thus $P$ is cyclic and consequently all Sylow subgroups of $G$ are cyclic, implying that $G$ is supersolvable. This completes the proof.\qedhere
\end{proof}

Next we will determine finite groups $G$ such that $\sigma_1(G)=2+\frac{11}{|G|}$\,. First of all, we will focus on $p$-groups. We observe that for all primes $p$ and all positive integers $n$ we have
\begin{equation}
\sigma_1(\mathbb{Z}_{p^n})=\displaystyle\frac{\sigma(p^n)}{p^n}=\displaystyle\frac{1+p+\cdots+p^n}{p^n}=1+\displaystyle\frac{1}{p-1}-\displaystyle\frac{1}{p^n(p-1)}<2.\nonumber
\end{equation}

The following lemma deals with the case of non-cyclic $p$-groups.

\begin{lemma}
Let $G$ be a non-cyclic $p$-group of order $p^n$. Then:
\begin{itemize}
\item[{\rm a)}] $\sigma_1(G)<2+\frac{11}{|G|}$\, if and only if either $n=3$, $p=2$ and $G=Q_8$, or $n=2$, $p\in\{2,3,5,7\}$ and $G=\mathbb{Z}_p\times\mathbb{Z}_p$;
\item[{\rm b)}] $\sigma_1(G)=2+\frac{11}{|G|}$\, if and only if $n=3$, $p=2$ and $G=\mathbb{Z}_2\times\mathbb{Z}_4$.
\end{itemize}
\end{lemma}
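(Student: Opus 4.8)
The plan is to work throughout with the integer
$$S(G)=|G|\,\sigma_1(G)=\sum_{H\le G}|H|,$$
so that a) becomes $S(G)<2p^n+11$ and b) becomes $S(G)=2p^n+11$. The guiding principle is that $S(G)$ grows quickly with the number of subgroups, so the entire difficulty is to control the handful of groups sitting near the threshold. First I would dispose of $n=2$ directly: the only non-cyclic group of order $p^2$ is $\mathbb{Z}_p\times\mathbb{Z}_p$, whose proper nontrivial subgroups are exactly the $p+1$ subgroups of order $p$, giving $S=1+(p+1)p+p^2=2p^2+p+1$. Then $S<2p^2+11\iff p+1<11\iff p\in\{2,3,5,7\}$, while $S=2p^2+11$ would force $p=10$, which is impossible; this yields the $n=2$ part of a) and shows no equality occurs for $n=2$.

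For $n\ge 3$ I would split according to whether $G$ has a unique subgroup of order $p$. If not, then $G$ is neither cyclic nor generalized quaternion, so by the standard classification it has at least $p+1$ subgroups of order $p$, and since $G$ is non-cyclic it also has at least $p+1$ maximal subgroups (of order $p^{n-1}$); for $n\ge 3$ these two families are disjoint, as $p^{n-1}>p$. Adding the trivial subgroup and $G$ gives
$$S(G)\ge 1+p^n+(p+1)p^{n-1}+(p+1)p=2p^n+p^{n-1}+p^2+p+1.$$
When $n\ge 4$ the excess satisfies $p^{n-1}+p^2+p+1\ge 15>11$, and when $n=3$, $p\ge 3$ it satisfies $2p^2+p+1\ge 22>11$; in both ranges a) and b) fail.

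The only surviving subcase of the previous paragraph is $p=2$, $n=3$, where the bound reads $S(G)\ge 27=2\cdot 8+11$, with equality exactly when $G$ has precisely three subgroups of order $2$ and three of order $4$ (and, automatically, none of any other proper nontrivial order). Here I would enumerate the non-cyclic groups of order $8$, namely $\mathbb{Z}_2^3$, $\mathbb{Z}_2\times\mathbb{Z}_4$ and the dihedral group $D_8$: only $\mathbb{Z}_2\times\mathbb{Z}_4$ realizes both counts as exactly three, giving $S=27$ and the equality case of b), whereas $\mathbb{Z}_2^3$ (seven subgroups of each order) and $D_8$ (five involutions) have strictly more subgroups and hence $S>27$, so they fail.

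It remains to treat the groups with a unique minimal subgroup, i.e.\ the generalized quaternion groups $Q_{2^n}$, for which the minimal-subgroup count above collapses. For $Q_8$ I would simply compute $S=1+2+3\cdot 4+8=23<27$, placing it in a). For $n\ge 4$ I would instead exploit the cyclic maximal subgroup $\langle a\rangle\cong\mathbb{Z}_{2^{n-1}}$: the orders of its subgroups sum to $\sigma(2^{n-1})=2^n-1$, and adjoining the two remaining (non-cyclic, hence uncounted) maximal subgroups of order $2^{n-1}$ together with $G$ itself yields
$$S(Q_{2^n})\ge (2^n-1)+2\cdot 2^{n-1}+2^n=3\cdot 2^n-1>2^{n+1}+11\qquad(n\ge 4),$$
so these fail as well. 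The main obstacle, demanding genuine care rather than a crude estimate, is exactly this $p=2$ analysis: the maximal-plus-minimal bound is tight at order $8$, forcing an honest enumeration of the four $2$-groups of that order to separate $\mathbb{Z}_2\times\mathbb{Z}_4$ (equality) from $Q_8$ (strict), and the generalized quaternion family must be isolated and bounded by the separate cyclic-maximal-subgroup device above.
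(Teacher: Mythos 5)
Your proof is correct, and it follows the same overall strategy as the paper --- a counting lower bound on $\sum_{H\le G}|H|$ that forces $(n,p)$ into a short list, followed by enumeration of the surviving groups of orders $p^2$ and $8$ --- but the bound itself is chosen differently, and this changes the shape of the argument. The paper counts one subgroup of each order $p^i$ for $0\le i\le n-2$ together with the $p+1$ maximal subgroups and $G$ itself, obtaining $\sigma_1(G)\ge 2+\frac{1+p+\cdots+p^{n-1}}{p^n}$ and hence $1+p+\cdots+p^{n-1}\le 11$ in one stroke, valid for every non-cyclic $p$-group; no case distinction is needed before the final check. You instead count $p+1$ minimal plus $p+1$ maximal subgroups, which is unavailable when $G$ has a unique subgroup of order $p$, so you must split off the generalized quaternion family and dispose of $Q_{2^n}$, $n\ge 4$, by the separate cyclic-maximal-subgroup estimate $S(Q_{2^n})\ge 3\cdot 2^n-1$. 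Your version costs an extra case but buys a sharper picture of the equality locus at order $8$ (equality in your bound pins down exactly three subgroups of each of orders $2$ and $4$, which immediately singles out $\mathbb{Z}_2\times\mathbb{Z}_4$), whereas the paper's uniform bound is shorter and delegates all of order $8$ to the concluding check. All of your numerical verifications ($S=2p^2+p+1$ for $\mathbb{Z}_p\times\mathbb{Z}_p$, $S=23$ for $Q_8$, $S=27$ for $\mathbb{Z}_2\times\mathbb{Z}_4$, $S=31$ for $D_8$, $S=51$ for $\mathbb{Z}_2^3$) are accurate.
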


\begin{proof}
Since $G$ contains at least $p+1$ subgroups of order $p^{n-1}$, one obtains
\begin{equation}
2+\frac{11}{p^n}\geq\sigma_1(G)\geq\displaystyle\frac{1+p+\cdots+(p+1)p^{n-1}\!+p^n}{p^n}=2+\displaystyle\frac{1+p+\cdots+p^{n-1}}{p^n}\,,\nonumber
\end{equation}that is $1+p+\cdots+p^{n-1}\leq 11$. Thus either $n=3$ and $p=2$, or $n=2$ and $p\in\{2,3,5,7\}$. The conclusion follows by checking the corresponding groups.\qedhere
\end{proof}

Our second main result is stated as follows.

\begin{theorem}\label{th:C1}
    Let $G$ be a finite non-cyclic group. If $\sigma_1(G)=2+\frac{11}{|G|}$\,, then either $G=\mathbb{Z}_2\times\mathbb{Z}_4$ or $G=A_4$.
\end{theorem}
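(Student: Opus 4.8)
The plan is to substitute the equation $\sigma_1(G)=2+\frac{11}{|G|}$ into the dichotomy of the preceding lemma (Lemma 2.1) and then tighten each branch until only $\mathbb{Z}_2\times\mathbb{Z}_4$ and $A_4$ remain. I first dispose of small orders: the only non-nilpotent groups of order at most $11$ are $S_3$ and $D_5$, both of which satisfy $\sigma_1<3<2+\frac{11}{|G|}$ and hence fail the equation. Therefore I may assume that $G$ is nilpotent of arbitrary order, or that $G$ is non-nilpotent with $|G|\ge 12$; in the latter case $\sigma_1(G)<3$, so Lemma 2.1 forces $G$ to be of type c) in Theorem 1.2.

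For the nilpotent branch, if $G$ is a $p$-group then Lemma 2.3(b) gives $G=\mathbb{Z}_2\times\mathbb{Z}_4$ at once. If $G$ is nilpotent with at least two prime divisors, I write $G=P\times H$ with $P$ a non-cyclic Sylow subgroup (one exists since $G$ is non-cyclic) and $H\ne 1$ the complementary Hall subgroup. Using multiplicativity, $\sigma_1(G)=\sigma_1(P)\sigma_1(H)$; setting $a=\sigma_1(P)-2>0$, $b=\sigma_1(H)-1>0$ and expanding $(2+a)(1+b)=2+\frac{11}{|P||H|}$, positivity of the cross terms gives $a<\frac{11}{|P||H|}$ and $2b<\frac{11}{|P||H|}$. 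Since $b\ge\frac{1}{|H|}$ this forces $|P|\le 5$, hence $P=\mathbb{Z}_2\times\mathbb{Z}_2$ and $a=\frac34$; then $\frac34<\frac{11}{4|H|}$ forces $|H|=3$, and $\sigma_1(\mathbb{Z}_2^2\times\mathbb{Z}_3)=\frac{11}{3}$ contradicts the equation. So the multi-prime nilpotent subcase is empty.

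For the type c) branch I write $G=(P\rtimes K_2)\times K_1$ and first show $K_1=1$. Counting the orders of the subgroups contained in the normal Sylow subgroup $P$, of the members of the non-normal maximal class $[M]$ (which sum to $|P\rtimes K_2|$ by Lemma 1.3), and of $P\rtimes K_2$ itself yields $\sigma_1(P\rtimes K_2)\ge 2+\frac{\sigma_1(P)}{|K_2|}$; combined with $\sigma_1(K_1)\ge 1+\frac{1}{|K_1|}$, multiplicativity would push $\sigma_1(G)$ past $2+\frac{11}{|G|}$ unless $K_1=1$ (concretely, the inequality reduces to $2|P||K_2|+|K_1|\sum_{H\le P}|H|+\sum_{H\le P}|H|\le 11$, impossible when $|K_1|\ge 2$). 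With $K_1=1$ the same count gives $\sum_{H\le P}|H|\le 11$, restricting $P$ to $\mathbb{Z}_2,\mathbb{Z}_4,\mathbb{Z}_2^2,\mathbb{Z}_3,\mathbb{Z}_5,\mathbb{Z}_7$; non-nilpotency of $P\rtimes K_2$ (a cyclic $p'$-group acting nontrivially) eliminates $\mathbb{Z}_2$ and $\mathbb{Z}_4$, whose automorphism groups admit no such action.

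The decisive step is the exact identity $\sum_{H\ne G,\,H\notin[M]}|H|=11$, which follows because $\sum_{H\le G}|H|=2|G|+11$ while $G$ and $[M]$ already contribute $2|G|$. For $P=\mathbb{Z}_2^2$ the subgroups of $P$ alone contribute $11$, so no further subgroup may lie outside $P$; since any nontrivial action of a cyclic $p'$-group on $\mathbb{Z}_2^2$ factors through the subgroup $\mathbb{Z}_3\le\mathrm{Aut}(\mathbb{Z}_2^2)\cong S_3$, any $|K_2|>3$ would leave a nontrivial central kernel $N$, whence $N$ and $PN$ are excess subgroups that overshoot $11$; hence $|K_2|=3$ and $G=A_4$. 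For $P=\mathbb{Z}_p$ with $p\in\{3,5,7\}$ the same central-kernel argument forces the action to be faithful, so $G=\mathbb{Z}_p\rtimes\mathbb{Z}_d$ is Frobenius with $d\mid p-1$; computing $\sum_{H\le G}|H|=1-p+2p\sigma(d)$ reduces the equation to $2p(\sigma(d)-d)=10+p$, which has no admissible solution. I expect this last bookkeeping for the prime-cyclic $P$ and the control of non-faithful actions to be the main obstacle: the whole weight of the argument rests on verifying that no subgroup beyond those in $P$, the complement class $[M]$, and $G$ itself intrudes on the budget of $11$, for which the excess identity is the essential lever.
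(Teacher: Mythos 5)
Your proof is correct, and its architecture coincides with the paper's: handle small orders, apply Lemma 2.1 to split into the nilpotent case and type c), treat the nilpotent case with Lemma 2.3 and multiplicativity, and reduce type c) to $\sum_{H\le P}|H|\le 11$. Where you genuinely diverge is in the two endgames. For nilpotent $G$ with several prime divisors the paper classifies each Sylow factor by Lemma 2.3 a) and then rules out $Q_8\times(\mathrm{cyclic})$ and $(\mathbb{Z}_p\times\mathbb{Z}_p)\times(\mathrm{cyclic})$ by comparing the two sides of an integer identity; your expansion of $(2+a)(1+b)$ pins everything down to the single candidate $\mathbb{Z}_2\times\mathbb{Z}_2\times\mathbb{Z}_3$, which fails numerically --- shorter, equally valid. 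In type c) the paper asserts $n\mid|\mathrm{Aut}\,G|$ (evidently meaning $|\mathrm{Aut}\,P|$), which presupposes that $K_2$ acts faithfully on $P$, and then disposes of the six pairs $(p,n)$ by direct inspection of the corresponding semidirect products. You instead extract the exact identity $\sum_{H\notin\{G\}\cup[M]}|H|=11$ from Lemma 1.3, use it to force faithfulness (a nontrivial central kernel $N$ contributes the extra normal subgroups $N$ and $PN$, hence at least $(p+1)|N|$, which breaks the budget in every admissible case), and then close the $P=\mathbb{Z}_p$ cases with the closed form $\sum_{H\le G}|H|=2p\sigma(d)-p+1$ for the Frobenius groups $\mathbb{Z}_p\rtimes\mathbb{Z}_d$, so that the equation becomes $2p(\sigma(d)-d)=10+p$ and fails by parity alone. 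Your route is slightly longer to set up but buys a uniform argument in place of a finite check, and it explicitly covers non-faithful actions such as $\mathbb{Z}_7\rtimes\mathbb{Z}_9$ acting through $\mathbb{Z}_3$, a configuration the paper's divisibility shortcut passes over without comment. Your treatment of orders at most $11$ (checking only $S_3$ and $D_5$ by hand and letting the nilpotent branch absorb the remaining small groups) is also sound, since that branch is carried out for all orders.
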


\begin{proof}
We can easily see that $G=\mathbb{Z}_2\times\mathbb{Z}_4$ is the unique group $G$ of order $\leq 11$ satisfying $\sigma_1(G)=2+\frac{11}{|G|}$\,. Assume that $|G|\geq 12$. Then $\sigma_1(G)<3$ and therefore, by Lemma 2.1, either $G$ is nilpotent or $G=(P\rtimes K_2)\times K_1$ with $P$, $K_1$ and $K_2$ as in Theorem 1.2, c).

In the first case, let $G=G_1\times\cdots\times G_k$ be the decomposition of $G$ as a direct product of Sylow $p$-subgroups. For $k=1$, we have no solution $G$ of $\sigma_1(G)=2+\frac{11}{|G|}$ by Lemma 2.3, b). For $k\geq 2$, we get $\sigma_1(G_i)<2+\frac{11}{|G_i|}$\,, $\forall\, i=1,...,k$. Then Lemma 2.3, a), shows that every $G_i$ is of one of the following types: $Q_8$, $\mathbb{Z}_{p_i}\times\mathbb{Z}_{p_i}$ or $\mathbb{Z}_{p_i^{n_i}}$ for some prime $p_i$ and some positive integer $n_i$. Since $$\sigma_1(Q_8)=\frac{23}{8}>2,\, \sigma_1(\mathbb{Z}_{p_i}\times\mathbb{Z}_{p_i})=2+\frac{p_i+1}{p_i^2}>2, \forall\, i=1,...,k$$and $$\sigma_1(G)=\sigma_1(G_1)\cdots\sigma_1(G_k),$$we infer that at most one of $G_i$'s is non-cyclic. Suppose that $G_1=Q_8$ and $G_i=\mathbb{Z}_{p_i^{n_i}}$, for all $i=2,...,k$. Then our condition becomes
$$\displaystyle\frac{23}{8}\,\frac{\sigma(p_2^{n_2})}{p_2^{n_2}}\cdots\frac{\sigma(p_k^{n_k})}{p_k^{n_k}}=2+\frac{11}{8p_2^{n_2}\cdots p_k^{n_k}}\,,$$i.e.
$$23\,\sigma(p_2^{n_2})\cdots\sigma(p_k^{n_k})=16p_2^{n_2}\cdots p_k^{n_k}+11\,,$$a contradiction (left side $>$ right side). Similarly, if we suppose $G_1=\mathbb{Z}_{p_1}\times\mathbb{Z}_{p_1}$ and $G_i=\mathbb{Z}_{p_i^{n_i}}$, for all $i=2,...,k$, then we obtain\newpage  $$\displaystyle\frac{2p_1^2+p_1+1}{p_1^2}\,\frac{\sigma(p_2^{n_2})}{p_2^{n_2}}\cdots\frac{\sigma(p_k^{n_k})}{p_k^{n_k}}=2+\frac{11}{p_1^2p_2^{n_2}\cdots p_k^{n_k}}\,,$$i.e.
$$(2p_1^2+p_1+1)\,\sigma(p_2^{n_2})\cdots\sigma(p_k^{n_k})=2p_1^2p_2^{n_2}\cdots p_k^{n_k}+11\,,$$a contradiction (left side $>$ right side). Thus every $G_i$ is cyclic, and so is $G$, contradicting our hypothesis.

In the second case, a similar reasoning shows that we have no solution $G=(P\rtimes K_2)\times K_1$ of $\sigma_1(G)=2+\frac{11}{|G|}$ with $K_1\neq 1$. Consequently, we may assume $G=P\rtimes K_2$. Let $|P|=p^r$ and $|K_2|=n$. Note that $p\nmid n$ and $n\mid |{\rm Aut}G|$. As in the proof of Theorem 2.2, we get $$2+\frac{11}{|G|}=\sigma_1(G)\geq 2+\displaystyle\frac{\displaystyle\sum_{H\leq P}|H|}{|G|}$$and thus $$\displaystyle\sum_{H\leq P}|H|\leq 11.$$It follows that either $r=2$, $p=2$ and $P\in\{\mathbb{Z}_4, \mathbb{Z}_2\times\mathbb{Z}_2\}$, or $r=1$, $p\in\{2,3,5,7\}$ and $P=\mathbb{Z}_p$.

If $P=\mathbb{Z}_4$, then $n\mid 2$, a contradiction. If $P=\mathbb{Z}_2\times\mathbb{Z}_2$, then $n\mid 6$ and so $n=3$; thus $G=A_4$, which is a solution of $\sigma_1(G)=2+\frac{11}{|G|}$\,. If $P=\mathbb{Z}_p$ with $p\in\{2,3,5,7\}$, then $n$ divides $1$, $2$, $4$ or $6$, respectively. We infer that the unique possibilities are $$(p,n)\in\{(3,2),(5,2),(5,4),(7,2),(7,3),(7,6)\}.$$By checking all these semidirect products $G=\mathbb{Z}_p\rtimes\mathbb{Z}_n$ we find no other solution of $\sigma_1(G)=2+\frac{11}{|G|}$\,. Hence in this case the unique solution is $G=A_4$, as desired.\qedhere
\end{proof}

Obviously, Theorem 2.4 leads to the following characterizations of $\mathbb{Z}_2\times\mathbb{Z}_4$ and $A_4$.

\begin{corollary}
$\mathbb{Z}_2\times\mathbb{Z}_4$ is the unique finite non-cyclic nilpotent group $G$ satisfying $\sigma_1(G)=2+\frac{11}{|G|}$\,, while $A_4$ is the unique finite non-nilpotent group $G$ satisfying $\sigma_1(G)=2+\frac{11}{|G|}$\,.
\end{corollary}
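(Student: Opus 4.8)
The plan is to reduce, via Lemma 2.1, to two structural cases and then run a finite arithmetic search in each. First I dispose of the small groups by direct inspection: $\mathbb{Z}_2\times\mathbb{Z}_4$ is the unique non-cyclic group of order at most $11$ with $\sigma_1(G)=2+\frac{11}{|G|}$, so I may henceforth assume $|G|\geq 12$. Since $2+\frac{11}{12}<3$, the hypothesis gives $\sigma_1(G)<3$, and Lemma 2.1 applies: $G$ is either nilpotent or of type c) in Theorem 1.2. The goal is to show the first alternative produces nothing new while the second produces only $A_4$.

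For the nilpotent case I write $G=G_1\times\cdots\times G_k$ as the product of its Sylow subgroups and use multiplicativity, $\sigma_1(G)=\prod_i\sigma_1(G_i)$. If $k=1$ then $G$ is a non-cyclic $p$-group, and Lemma 2.3 b) shows the only equality case has order $8$, which $|G|\geq 12$ excludes; so I may assume $k\geq 2$. The preliminary observation is that then every factor satisfies the strict bound $\sigma_1(G_i)<2+\frac{11}{|G_i|}$, for otherwise multiplying the offending factor by the remaining ones (each strictly above $1$) would push $\sigma_1(G)$ strictly past $2+\frac{11}{|G|}$. This licenses Lemma 2.3 a), so each $G_i$ is $Q_8$, $\mathbb{Z}_{p_i}\times\mathbb{Z}_{p_i}$, or cyclic; and since a non-cyclic factor contributes a value above $2$, two of them would force $\sigma_1(G)>4>3$, so exactly one factor is non-cyclic. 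I then split into the two shapes $G_1=Q_8$ and $G_1=\mathbb{Z}_{p_1}\times\mathbb{Z}_{p_1}$ (the rest cyclic), clear denominators in $\sigma_1(G)=2+\frac{11}{|G|}$, and check that in each the integer left-hand side strictly exceeds the right-hand side, so no solution survives.

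For the type c) case $G=(P\rtimes K_2)\times K_1$, I first force $K_1=1$: the non-nilpotent factor $G_1=P\rtimes K_2$ carries its full conjugacy class of non-normal maximal subgroups, so $\sigma_1(G_1)>2$ by Lemma 1.3, and if $K_1\neq 1$ then multiplicativity yields $\sigma_1(G)>2(1+\frac{1}{|K_1|})$, which against the hypothesis forces $|G_1|\leq 5$, impossible for a non-nilpotent group. Reducing to $G=P\rtimes K_2$ with $K_2$ cyclic of order $n$ coprime to $p$, I bound the subgroups of the normal Sylow $p$-subgroup $P$ exactly as in the proof of Theorem 2.2 to get $\sum_{H\leq P}|H|\leq 11$, which leaves only $P\in\{\mathbb{Z}_4,\mathbb{Z}_2\times\mathbb{Z}_2\}$ or $P=\mathbb{Z}_p$ with $p\in\{2,3,5,7\}$. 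Since $n$ divides $|\mathrm{Aut}(P)|$ and the extension must be non-nilpotent, only finitely many pairs $(p,n)$ remain; computing $\sigma_1$ on each of the resulting $\mathbb{Z}_p\rtimes\mathbb{Z}_n$ and on $(\mathbb{Z}_2\times\mathbb{Z}_2)\rtimes\mathbb{Z}_3=A_4$ singles out $A_4$ as the only solution.

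I expect the main obstacle to be organizational rather than conceptual: the bookkeeping that at most one Sylow factor is non-cyclic, combined with the finite but nontrivial verification of the cleared-denominator inequalities and of the explicit semidirect products. Care is needed to keep every inequality ``left side $>$ right side'' genuinely strict, and to confirm that none of the surviving pairs $(p,n)$ accidentally reproduces $2+\frac{11}{|G|}$, so that $A_4$ really is isolated.
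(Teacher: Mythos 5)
Your argument is sound, but it proves much more than it needs to: you have in effect reconstructed the proof of Theorem 2.4 rather than deduced the corollary from it. In the paper this corollary is an immediate consequence of Theorem 2.4, which already classifies \emph{all} finite non-cyclic groups with $\sigma_1(G)=2+\frac{11}{|G|}$ as exactly $\mathbb{Z}_2\times\mathbb{Z}_4$ and $A_4$; from there the corollary is two lines (a non-nilpotent group is automatically non-cyclic and, being non-nilpotent, cannot be $\mathbb{Z}_2\times\mathbb{Z}_4$, hence is $A_4$; a non-cyclic nilpotent solution cannot be $A_4$, hence is $\mathbb{Z}_2\times\mathbb{Z}_4$; and both groups really are solutions, since $\sigma_1(\mathbb{Z}_2\times\mathbb{Z}_4)=\frac{27}{8}$ and $\sigma_1(A_4)=\frac{35}{12}$). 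Everything else in your write-up --- the reduction via Lemma 2.1, the Sylow decomposition with Lemma 2.3 and the cleared-denominator inequalities in the nilpotent case, the bound $\sum_{H\leq P}|H|\leq 11$ and the finite search in the type c) case --- is precisely the paper's proof of Theorem 2.4, reproduced correctly; your elimination of $K_1\neq 1$ by comparing $\sigma_1(G)>2+\frac{2}{|K_1|}$ with $2+\frac{11}{|G_1||K_1|}$ is in fact more explicit than the paper's ``a similar reasoning shows.'' One caveat, which you share with the paper's own proof of Theorem 2.4: the assertion that $n$ divides $|\mathrm{Aut}(P)|$ presumes $K_2$ acts faithfully on $P$. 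If the action had a kernel $Z\neq 1$, then $Z$ is central and $PZ$ and $Z$ are additional proper normal subgroups lying outside $[M]$ and outside $P$; feeding them into the same counting inequality gives $\sum_{H\leq P}|H|+|P||Z|+|Z|\leq 11$, which fails in every surviving case (for instance it rules out $\mathbb{Z}_3\rtimes\mathbb{Z}_4$), so the gap is harmless but should be closed explicitly.
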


Finally, we remark that $\sigma_1(G)<\frac{31}{12}=\sigma_1(A_4)$ does not imply the supersolvability of $G$, a counterexample being $SmallGroup(56,11)$ (also known as the Frobenius group $F_8$). In fact, we are able to prove that there is no constant $c\in(2,\infty)$ such that $\sigma_1(G)<c$ implies the supersolvability of $G$.

\begin{theorem}\label{th:C1}
    There are sequences of finite non-supersolvable groups $(G_n)_{n\in\mathbb{N}}$ such that $\sigma_1(G_n)$ approaches $2$ monotonically from above as $n$ tends to in\-fi\-nity.
\end{theorem}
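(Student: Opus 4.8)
The plan is to exhibit an explicit family of non-supersolvable Frobenius groups whose subgroup structure is rigid enough to control $\sigma_1$ precisely. Fix an odd prime $q$ and a prime $p$ with $q\mid p+1$, and set $G=\mathbb{Z}_p^2\rtimes\mathbb{Z}_q$, where $\mathbb{Z}_q$ is realized as the subgroup of order $q$ of $\mathbb{F}_{p^2}^*\cong\mathbb{Z}_{p^2-1}$ acting on $V=\mathbb{F}_p^2\cong\mathbb{F}_{p^2}$ by multiplication. Since $q$ is odd and $q\mid p+1$, we have $q\nmid p-1$, so a generator of $\mathbb{Z}_q$ has no eigenvalue in $\mathbb{F}_p$; hence the action is faithful and irreducible over $\mathbb{F}_p$, and therefore fixed-point-free. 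Consequently $V$ is a minimal normal subgroup and a chief factor of $G$ of order $p^2$; being non-cyclic, it shows that $G$ is not supersolvable. By Dirichlet's theorem there are infinitely many primes $p\equiv -1\pmod q$, so such a $G$ exists for every odd prime $q$.

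Next I would compute $\sigma_1(G)$ by sorting subgroups according to their order, which must divide $|G|=p^2q$. Since $V$ is the unique (normal) Sylow $p$-subgroup, every subgroup of order $p$ or $p^2$ lies in $V$: these are the trivial subgroup, the $p+1$ lines, and $V$ itself, contributing $1+(p+1)p+p^2=1+p+2p^2$. As $G$ is Frobenius with kernel $V$, its Sylow $q$-subgroups are the $p^2$ conjugate complements, each of order $q$, contributing $p^2q$, and $G$ itself contributes another $p^2q$. The irreducibility of the $\mathbb{Z}_q$-action is exactly what excludes any subgroup of order $pq$, since such a subgroup would force $\mathbb{Z}_q$ to normalize a line of $V$. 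Summing all contributions gives
$$\sigma_1(G)=\frac{1+p+2p^2+2p^2q}{p^2q}=2+\frac{2}{q}+\frac{1}{pq}+\frac{1}{p^2q}>2.$$

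It then remains to organize these into a monotone sequence. Choosing an increasing sequence of odd primes $q_1<q_2<\cdots$ and, for each $n$, a prime $p_n\equiv-1\pmod{q_n}$ taken sufficiently large, I obtain $G_n=\mathbb{Z}_{p_n}^2\rtimes\mathbb{Z}_{q_n}$ with $\sigma_1(G_n)=2+\tfrac{1}{q_n}\bigl(2+\tfrac{1}{p_n}+\tfrac{1}{p_n^2}\bigr)\to 2^+$. Strict monotone decrease follows by picking each $p_n$ so large that $2+\tfrac{1}{p_n}+\tfrac{1}{p_n^2}<2\,q_n/q_{n-1}$; this yields $\sigma_1(G_n)-2<\tfrac{2}{q_{n-1}}<\sigma_1(G_{n-1})-2$, as required.

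The only genuinely delicate point I expect is the completeness of the subgroup enumeration, namely the absence of any subgroup of order $pq$. This is precisely where the fixed-point-free (irreducible) action is indispensable, and it is what keeps the perturbation term $\tfrac{2p^2+p+1}{p^2q}$ small. Everything else reduces to a routine divisor-sum computation together with a standard application of Dirichlet's theorem to secure the required primes.
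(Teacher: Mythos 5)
Your proposal is correct and follows essentially the same route as the paper: non-supersolvable Frobenius groups of order $p^2q$ with elementary abelian kernel and fixed-point-free complement $\mathbb{Z}_q$, yielding $\sigma_1(G)=2+\frac{1}{q}\left(2+\frac{1}{p}+\frac{1}{p^2}\right)$. Note that your congruence $q\mid p+1$ (rather than the paper's stated $q\mid p-1$) is the one under which the action is irreducible and the subgroup count --- in particular the absence of subgroups of order $pq$ --- actually holds, and your explicit choice of the $p_n$ makes the asserted monotonicity rigorous rather than merely claimed.
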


\begin{proof}
Let $(q_n)_{n\in\mathbb{N}}$ be the sequence of odd prime numbers. By Dirichlet's theorem we infer that for every $n\in\mathbb{N}$ there is a prime $p_n$ such that $q_n\mid p_n-1$. Let $G_n$ be the unique non-abelian group of order $p_n^2q_n$. Clearly, $G_n$ is not supersolvable. It contains one subgroup of order $1$, $p_n+1$ subgroups of order $p_n$, one subgroup of order $p_n^2$, $p_n^2$ subgroups of order $q_n$, and one subgroup of order $p_n^2q_n$. Then
\begin{equation}
\sigma_1(G_n)=\frac{1+(p_n+1)p_n+p_n^2+p_n^2q_n+p_n^2q_n}{p_n^2q_n}=2+\frac{2+\displaystyle\frac{1}{p_n}+\displaystyle\frac{1}{p_n^2}}{q_n}\,,\nonumber
\end{equation}which approaches $2$ monotonically from above as $n$ tends to infinity, as desired.\qedhere
\end{proof}

We end this paper by formulating a natural problem concerning the above study.
\bigskip

\noindent{\bf Open problem.} Find all finite cyclic groups $G$ satisfying $\sigma_1(G)=2+\frac{11}{|G|}$\,.
\bigskip

Note that this is equivalent to finding all positive integers $n=p_1^{n_1}\cdots p_k^{n_k}$ such that
$$\left(1+p_1+\cdots+p_1^{n_1}\right)\cdots\left(1+p_k+\cdots+p_k^{n_k}\right)=2p_1^{n_1}\cdots p_k^{n_k}+11.$$

\vspace*{5ex}\small

\hfill
\begin{minipage}[t]{5cm}
Marius T\u arn\u auceanu \\
Faculty of  Mathematics \\
``Al.I. Cuza'' University \\
Ia\c si, Romania \\
e-mail: {\tt tarnauc@uaic.ro}
\end{minipage}


\begin{thebibliography}{10}
\bibitem{1} S.M. Jafarian Amiri, {\it Characterization of $A_5$ and $PSL(2,7)$ by sum of element orders}, Int. J. Group Theory {\bf 2} (2013), 35-39.
\bibitem{2} M. Baniasad Asad and B. Khosravi, {\it A criterion for solvability of a finite group by the sum of element orders}, J. Algebra {\bf 516} (2018), 115-124.
\bibitem{3} M. Herzog, P. Longobardi and M. Maj, {\it Two new criteria for solvability of finite groups}, J. Algebra {\bf 511} (2018), 215-226.
\bibitem{4} M. Herzog, P. Longobardi and M. Maj, {\it On a criterion for solvability of a finite group}, to appear in Comm. Algebra, https://doi.org/10.1080/00927872.2020.1867994.
\bibitem{5} I.M. Isaacs, {\it Finite group theory}, Graduate Studies in Mathematics {\bf 92}, American Mathematical Society, Providence, RI, 2008.
\bibitem{6} M. Kano, {\it On the number of conjugate classes of maximal subgroups in finite groups}, Proc. Japan Acad. Ser. A Math. Sci. {\bf 550} (1979), 264-265.
\bibitem{7} T. De Medts and M. T\u arn\u auceanu, {\it Finite groups determined by an ine\-quality of the orders of their subgroups}, Bull. Belg. Math. Soc. Simon Stevin {\bf 15} (2008), 699-704.
\bibitem{8} V.V. Pylaev and N. F. Kuzennyi, {\it Finite groups with a cyclic maximal subgroup}, Ukrainian Math. J. {\bf 28} (1976), 500-506.
\bibitem{9} R. Schmidt, {\it Subgroup lattices of groups}, de Gruyter Expositions in Mathe\-ma\-tics {\bf 14}, de Gruyter, Berlin, 1994.
\bibitem{10} M. T\u arn\u auceanu, {\it Finite groups determined by an inequality of the orders of their subgroups} II, Comm. Algebra {\bf 45} (2017), 4865-4868.
\bibitem{11} M. T\u arn\u auceanu, {\it On the solvability of a finite group by the sum of subgroup orders}, Bull. Korean Math. Soc. {\bf 57} (2020), 1475-1479.
\end{thebibliography}
\end{document}